\documentclass[12pt]{amsart}
\usepackage{amssymb}
\usepackage{enumitem}
\usepackage{graphicx}

\makeatletter
\@namedef{subjclassname@2010}{%
  \textup{2010} Mathematics Subject Classification}
\makeatother

\usepackage[T1]{fontenc}
\newtheorem{theorem}{Theorem}[section]

\newtheorem{lemma}[theorem]{Lemma}

\theoremstyle{definition}

\newtheorem{remark}[theorem]{Remark}

\numberwithin{equation}{section}
\frenchspacing

\def\Z{\Bbb Z}
\def\N{\Bbb N}

\def\C{\Bbb C}
\def\l{\left}
\def\r{\right}
\def\bg{\bigg}
\def\({\bg(}
\def\){\bg)}
\def\t{\text}
\def\f{\frac}

\def\ls{\leqslant}

\def\bi{\binom}

\def\M#1#2{{#1\brack #2}_q}

\theoremstyle{plain}

\textwidth=13.5cm
\textheight=23cm
\parindent=16pt
\oddsidemargin=-0.5cm
\evensidemargin=-0.5cm
\topmargin=-0.5cm

\begin{document}

\baselineskip=17pt
\hbox{Colloq. Math. 158 (2019), no.\,2, 313--320.}
\medskip

\title[Two $q$-analogues of Euler's formula $\zeta(2)=\pi^2/6$]{Two $q$-analogues of Euler's formula $\zeta(2)=\pi^2/6$}

\author[Z.-W. Sun]{Zhi-Wei Sun}
\address{Department of Mathematics\\ Nanjing University\\
Nanjing 210093\\
People's Republic of China}
\email{zwsun@nju.edu.cn}

\date{}

\begin{abstract}
It is well known that $\zeta(2)=\pi^2/6$ as discovered by Euler.
In this paper we present the following two $q$-analogues of this celebrated formula:
$$\sum_{k=0}^\infty\f{q^k(1+q^{2k+1})}{(1-q^{2k+1})^2}=\prod_{n=1}^\infty\f{(1-q^{2n})^4}{(1-q^{2n-1})^4}$$
and
$$\sum_{k=0}^\infty\f{q^{2k-\lfloor(-1)^kk/2\rfloor}}{(1-q^{2k+1})^2}
=\prod_{n=1}^\infty\f{(1-q^{2n})^2(1-q^{4n})^2}{(1-q^{2n-1})^2(1-q^{4n-2})^2},$$
where $q$ is any complex number with $|q|<1$. We also give a $q$-analogue of the identity $\zeta(4)=\pi^4/90$,
and pose a problem on $q$-analogues of Euler's formula for $\zeta(2m)\ (m=3,4,\ldots)$.
\end{abstract}

\subjclass[2010]{Primary 05A30;
Secondary 11B65, 11M06, 33D05.}

\keywords{Identity, $q$-analogue, zeta function}

\maketitle

\section{Introduction}
For $n\in\N=\{0,1,2,\ldots\}$, the $q$-analogue of $n$ is defined as
$$[n]_q:=\f{1-q^n}{1-q}=\sum_{0\ls k<n}q^k\in\Z[q].$$
Note that $\lim_{q\to 1}[n]_q=n$.
For $|q|<1$, the {\it $q$-Gamma function} introduced by F. H. Jackson \cite{J} in 1905 is given by
$$\Gamma_q(x):=(1-q)^{1-x}\prod_{n=1}^\infty\f{1-q^n}{1-q^{n+x-1}}.$$
In view of the basic properties of the $q$-Gamma function (cf. \cite[pp.\,493--496]{AAR}), we have
\begin{equation}\label{1.1}\lim_{q\to1\atop |q|<1}(1-q^2)\prod_{n=1}^\infty\f{(1-q^{2n})^2}{(1-q^{2n-1})^2}
=\lim_{q\to1\atop|q|<1}\Gamma_{q^2}\l(\f12\r)^2=
\Gamma\l(\f12\r)^2=\pi,\end{equation}
which is essentially equivalent to Wallis' formula
$$\prod_{n=1}^\infty\f{4n^2}{4n^2-1}=\f{\pi}2$$
since
$$\prod_{n=1}^\infty\f{(1-q^{2n})^2}{(1-q^{2n-1})(1-q^{2n+1})}
=(1-q)\prod_{n=1}^\infty\f{(1-q^{2n})^2}{(1-q^{2n-1})^2}\ \ \t{for}\ |q|<1.$$

In light of \eqref{1.1},
$$\lim_{q\to1\atop|q|<1}(1-q)\prod_{n=1}^\infty\f{(1-q^{4n})^2}{(1-q^{4n-2})^2}
=\lim_{q\to1\atop|q|<1}\f{1-q}{1-q^4}\times \pi=\f{\pi}4$$
and hence we may view Ramanujan's formula
$$\sum_{k=0}^\infty\f{(-q)^k}{1-q^{2k+1}}=\prod_{n=1}^\infty\f{(1-q^{4n})^2}{(1-q^{4n-2})^2}\ \ \ (|q)<1)$$
(equivalent to Example (iv) in B. C. Berndt \cite[p.\,139]{B91}) as a $q$-analogue of
Leibniz's classical identity
$$\sum_{k=0}^\infty\f{(-1)^k}{2k+1}=\f{\pi}4.$$
Recently, V.J.W. Guo and J.-C. Liu \cite{GL} gave some $q$-analogues of two Ramanujan-type series for $1/\pi$.

Let $\C$ be the field of complex numbers. The {\it Riemann zeta function} is given by
$$\zeta(s):=\sum_{n=1}^\infty\f1{n^s}\quad \ \t{for}\ s\in\C\ \t{with}\ \Re(s)>1.$$
In 1734 L. Euler obtained the elegant formula
\begin{equation}\label{1.2}\zeta(2)=\f{\pi^2}6.\end{equation}
In 2011 Kh. Hessami Pilehrood and T. Hessami Pilehrood \cite{HP}
gave an interesting $q$-analogue of the known identity $3\sum_{n=1}^\infty1/(n^2\bi{2n}n)=\zeta(2),$
which states that
$$\sum_{n=1}^\infty\f{q^{n^2}(1+2q^n)}{[n]_q^2\M{2n}n}=\sum_{n=1}^\infty\f{q^n}{[n]_q^2} \quad\t{for}\ |q|<1,$$
where
$$\M{2n}n=\f{\prod_{k=1}^{2n}[k]_q}{\prod_{j=1}^n[j]_q^2}$$
is the $q$-analogue of the central binomial coefficient $\bi{2n}n$.

Euler's celebrated formula $(1.2)$ plays very important roles in modern mathematics.
Though $\sum_{n=1}^\infty q^n/{[n]_q^2}$ (with $q|<1$) is a natural $q$-analogue of $\zeta(2)$,
it seems hopeless to use it to give a $q$-analogue of \eqref{1.2}.
As nobody has given a $q$-analogue of Euler's formula \eqref{1.2} before,
we aim to present two $q$-analogues of \eqref{1.2} in this paper. 

Our main result is as follows.

\begin{theorem}\label{Th1.1} For any $q\in\C$ with $|q|<1$, we have
\begin{equation}\label{1.3}\sum_{k=0}^\infty\f{q^k(1+q^{2k+1})}{(1-q^{2k+1})^2}
=\prod_{n=1}^\infty\f{(1-q^{2n})^4}{(1-q^{2n-1})^4},\end{equation}
\begin{equation}\label{1.4}\sum_{k=0}^\infty\f{q^{2k-\lfloor(-1)^kk/2\rfloor}}{(1-q^{2k+1})^2}
=\prod_{n=1}^\infty\f{(1-q^{2n})^2(1-q^{4n})^2}{(1-q^{2n-1})^2(1-q^{4n-2})^2}.
\end{equation}
\end{theorem}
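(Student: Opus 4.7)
\emph{Proof proposal.} The plan is to reduce both identities to classical Lambert series for the divisor function. For \eqref{1.3} this is direct; for \eqref{1.4} it requires an additional $2$-dissection step.

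For \eqref{1.3}, I start from the elementary expansion $\f{1+x}{(1-x)^2}=\sum_{m\ge 0}(2m+1)x^m$ applied with $x=q^{2k+1}$. Substituting into the left-hand side of \eqref{1.3} and interchanging the two summations — the exponent rewrites in the symmetric form $k(2m+1)+m$, so the $k$-sum becomes a geometric series — I get
$$\sum_{k=0}^\infty\f{q^k(1+q^{2k+1})}{(1-q^{2k+1})^2}=\sum_{m=0}^\infty\f{(2m+1)\,q^m}{1-q^{2m+1}}.$$
Expanding $1/(1-q^{2m+1})$ and collecting by $n$ via the substitution $(2m+1)(2j+1)=2n+1$ identifies this Lambert series with $\sum_{n\ge 0}\sigma(2n+1)\,q^n$. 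By Jacobi's four-triangle theorem this equals $\psi(q)^4$, where $\psi(q)=\prod_{n\ge 1}(1-q^{2n})/(1-q^{2n-1})=\sum_{n\ge 0}q^{n(n+1)/2}$, which matches the right-hand side of \eqref{1.3}.

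For \eqref{1.4}, I would split the left-hand side by parity of $k$: the terms with $k=2j$ give $q^{3j}/(1-q^{4j+1})^2$ and the terms with $k=2j+1$ give $q^{5j+3}/(1-q^{4j+3})^2$. Expanding each factor $1/(1-x)^2=\sum_{m\ge 1}mx^{m-1}$ and reparameterising by the factorisations $4n+3=ab$ with $a,b$ odd, the even-$k$ piece contributes weight $(a+1)/4$ for each factorisation with $a\equiv 3\pmod 4$ (so $b\equiv 1$), while the odd-$k$ piece contributes $(a-1)/4$ for each factorisation with $a\equiv 1\pmod 4$ (so $b\equiv 3$). Since $4n+3$ is never a perfect square, its divisors pair off across the residues $1$ and $3$ modulo $4$, so the two residue counts agree; the $\pm1$ corrections cancel and the two pieces combine to
$$\sum_{k=0}^\infty\f{q^{2k-\lfloor(-1)^k k/2\rfloor}}{(1-q^{2k+1})^2}=\f14\sum_{n=0}^\infty\sigma(4n+3)\,q^n.$$

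It then remains to show that the right-hand side product in \eqref{1.4}, which equals $\psi(q)^2\psi(q^2)^2$, coincides with $\tfrac14\sum_{n\ge 0}\sigma(4n+3)q^n$. I would derive this by $2$-dissecting the Jacobi identity used for \eqref{1.3}: replacing $q$ by $-q$ in $\psi(q)^4=\sum_{n\ge 0}\sigma(2n+1)q^n$ and subtracting isolates the terms with $N\equiv 3\pmod 4$, producing $\psi(q)^4-\psi(-q)^4=2\sum_{n\ge 0}\sigma(4n+3)q^{2n+1}$. Coupling this with the product identity $\psi(q)^4-\psi(-q)^4=8q\,\psi(q^2)^2\psi(q^4)^2$ and rescaling $q^2\mapsto q$ then delivers \eqref{1.4}. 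The principal obstacle is this last product identity — i.e., the $2$-dissection of $\psi(q)^4$ — which should follow from the Jacobi triple product together with standard relations among $\psi(q)$, $\psi(-q)$, and $\psi(q^2)$, or alternatively by combining Jacobi's formula above with the Ramanujan identity $\sum_{k\ge 0}(-q)^k/(1-q^{2k+1})=\psi(q^2)^2$ quoted in the introduction.
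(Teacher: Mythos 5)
Your argument for \eqref{1.3} is essentially the paper's: both proofs reduce the left-hand side, by expanding the Lambert-type series and reindexing over factorizations of $2n+1$, to $\sum_{n\ge0}\sigma(2n+1)q^n$ (this is the paper's Lemma~\ref{Lem2.3}), and both then invoke the four-triangular-number theorem $\psi(q)^4=\sum_{n\ge0}\sigma(2n+1)q^n$ together with Gauss's product formula for $\psi(q)$.

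For \eqref{1.4} you take a genuinely different route. The paper writes the product side as $\psi(q)^2\psi(q^2)^2$ and quotes Williams' formula (Lemma~\ref{Lem2.4}) for the number of representations $n=T_u+T_v+2T_x+2T_y$, namely $\sum_{d\mid 4n+3}(d-(-1)^{(d-1)/2})/4$, and then unravels that weighted divisor sum into the left-hand side. You instead show directly that the left-hand side equals $\f14\sum_{n\ge0}\sigma(4n+3)q^n$ --- your parity split is correct, and the pairing $d\leftrightarrow(4n+3)/d$ does swap divisors $\eq1$ and $\eq3\pmod 4$, so the $\pm1$ corrections cancel as you claim --- and you recover the product side by $2$-dissecting the identity already used for \eqref{1.3}. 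This replaces the appeal to Williams' comparatively non-classical result by Jacobi's theorem plus one standard theta-function identity, which is a nice economy. The step you flag as the principal obstacle, $\psi(q)^4-\psi(-q)^4=8q\,\psi(q^2)^2\psi(q^4)^2$, is indeed standard and closes readily: from $\psi(q)^2=\varphi(q)\psi(q^2)$, where $\varphi(q)=\sum_{n\in\Z}q^{n^2}$, one gets $\psi(q)^4-\psi(-q)^4=\psi(q^2)^2\bigl(\varphi(q)^2-\varphi(-q)^2\bigr)$, and $\varphi(q)^2-\varphi(-q)^2=8q\psi(q^4)^2$ is the classical $2$-dissection of $\varphi^2$; both of these appear in \cite{B06}. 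With that reference supplied, your proof is complete and correct.
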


Clearly,
$$\sum_{k=0}^\infty\f1{(2k+1)^2}=\sum_{n=1}^\infty\f1{n^2}-\sum_{k=1}^\infty\f1{(2k)^2}=\f34\zeta(2)$$
and hence (\ref{1.2}) has the equivalent form
\begin{equation}\label{1.5}\sum_{k=0}^\infty\f1{(2k+1)^2}=\f{\pi^2}8.\end{equation}
Now we explain why (\ref{1.5}) follows from (\ref{1.3}) or (\ref{1.4}).
In view of (\ref{1.1}),
$$\lim_{q\to 1\atop|q|<1}(1-q^2)^2\prod_{n=1}^\infty\f{(1-q^{2n})^4}{(1-q^{2n-1})^4}=\pi^2,$$
$$\lim_{q\to1\atop|q|<1}(1-q^2)(1-q^4)\prod_{n=1}^\infty
\f{(1-q^{2n})^2(1-q^{4n})^2}{(1-q^{2n-1})^2(1-q^{4n-2})^2}=\pi^2.$$
Thus
\begin{equation}\label{1.6}\lim_{q\to1\atop|q|<1}(1-q)^2\prod_{n=1}^\infty\f{(1-q^{2n})^4}{(1-q^{2n-1})^4}
=\f{\pi^2}4\end{equation}
and
$$\lim_{q\to1\atop|q|<1}(1-q)^2\prod_{n=1}^\infty\f{(1-q^{2n})^2(1-q^{4n})^2}{(1-q^{2n-1})^2(1-q^{4n-2})^2}
=\f{\pi^2}8.$$
On the other hand,
$$\lim_{q\to 1\atop|q|<1}(1-q)^2\sum_{k=0}^\infty\f{q^k(1+q^{2k+1})}{(1-q^{2k+1})^2}
=\lim_{q\to 1\atop|q|<1}\sum_{k=0}^\infty\f{q^k(1+q^{2k+1})}{[2k+1]_q^2}=\sum_{k=0}^\infty\f2{(2k+1)^2},$$
$$\lim_{q\to 1\atop|q|<1}(1-q)^2\sum_{k=0}^\infty\f{q^{2k-\lfloor(-1)^kk/2\rfloor}}{(1-q^{2k+1})^2}
=\lim_{q\to1\atop|q|<1}\sum_{k=0}^\infty\f{q^{2k-\lfloor(-1)^kk/2\rfloor}}{[2k+1]_q^2}=\sum_{k=0}^\infty\f1{(2k+1)^2}.$$
Therefore (\ref{1.3}) and (\ref{1.4}) indeed give $q$-analogues of $(\ref{1.2})$.

We also deduce a $q$-analogue of the known formula $\zeta(4)=\pi^4/90$, which has the equivalent form
\begin{equation}\label{1.7}\sum_{k=0}^\infty\f1{(2k+1)^4}=\f{\pi^4}{96}.\end{equation}

\begin{theorem}\label{Th1.2} For any $q\in\C$ with $|q|<1$, we have
\begin{equation}\label{1.8}\sum_{k=0}^\infty\f{q^{2k}(1+4q^{2k+1}+q^{4k+2})}{(1-q^{2k+1})^4}=\prod_{n=1}^\infty\f{(1-q^{2n})^8}{(1-q^{2n-1})^8}.
\end{equation}
\end{theorem}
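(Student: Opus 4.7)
The plan is to parallel the Lambert-series reduction used for Theorem~\ref{Th1.1} and then reduce (\ref{1.8}) to a convolution identity built on top of (\ref{1.3}). First, I would simplify the left-hand side of (\ref{1.8}). Starting from the elementary partial-fraction expansion
$$\f{1+4x+x^2}{(1-x)^4}=\sum_{m=0}^{\infty}(m+1)^3 x^m,$$
(which one can verify by differentiating $x(1+x)/(1-x)^3=\sum_{m\ge 0}m^2x^m$ in $x$), substituting $x=q^{2k+1}$, multiplying by $q^{2k}$, summing in both indices and swapping the order of summation, and finally setting $j=m+1$, one arrives at
$$\sum_{k=0}^{\infty}\f{q^{2k}(1+4q^{2k+1}+q^{4k+2})}{(1-q^{2k+1})^4}
=\sum_{j=1}^{\infty}\f{j^3\,q^{j-1}}{1-q^{2j}}
=\sum_{N=1}^{\infty}\sigma_3^{*}(N)\,q^{N-1},$$
where $\sigma_3^{*}(N):=\sum_{d\mid N,\ N/d\ \t{odd}}d^3$.

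Next I would exploit that the product on the right of (\ref{1.8}) is the square of the product on the right of (\ref{1.3}). By Theorem~\ref{Th1.1}, this square equals the square of the left-hand side of (\ref{1.3}), which by the analogous Lambert-series manipulation (using $(1+x)/(1-x)^2=\sum_{m\ge 0}(2m+1)x^m$) simplifies to
$$\(\sum_{k=0}^{\infty}\sigma_1(2k+1)\,q^k\)^{\!2}.$$
Hence (\ref{1.8}) reduces, upon comparing coefficients of $q^m$, to the arithmetic convolution identity
$$\sigma_3^{*}(m+1)=\sum_{i+j=m}\sigma_1(2i+1)\,\sigma_1(2j+1)\qquad(m\ge 0),$$
equivalently, writing $N=2m+2$, $a=2i+1$, $b=2j+1$,
$$\sum_{\substack{a+b=N\\ a,b\ \t{odd}}}\sigma_1(a)\,\sigma_1(b)=\sigma_3^{*}(N/2)\qquad(N\ \t{even},\ N\ge 2).$$

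The main obstacle is establishing this convolution identity. The most conceptual route is to recognize the left-hand side as the coefficient of $q^N$ in $(\theta_2(q)^4/16)^2=\theta_2(q)^8/256$, using Jacobi's identity $\theta_2(q)^4=16\sum_{n\ \t{odd}}\sigma_1(n)q^n$ (equivalent to the four-triangular-number theorem), and the right-hand side as the coefficient of $q^{N/2}$ in the weight-$4$ Eisenstein-type Lambert series $\sum_{n\ge 1}n^3q^n/(1-q^{2n})$; since both sides belong to the two-dimensional space of weight-$4$ modular forms on $\Gamma_0(2)$, matching two initial Fourier coefficients suffices. Alternatively, one can argue elementarily via the odd-integer analogue of B\'esge's convolution formula $\sum_{a+b=n}\sigma_1(a)\sigma_1(b)=\f{1}{12}(5\sigma_3(n)-(6n-1)\sigma_1(n))$, splitting the sum by parity of the summands. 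In either case, the Lambert-series reductions in the first two paragraphs are routine bookkeeping; the genuinely new arithmetical content of (\ref{1.8}), beyond what Theorem~\ref{Th1.1} already provides, is encoded in this convolution identity, which is where the real work of the proof lies.
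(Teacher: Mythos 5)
Your reductions of both sides to Lambert/divisor series are correct, and for the left-hand side they reproduce the paper's own computation: the expansion $\frac{1+4x+x^2}{(1-x)^4}=\sum_{m\ge0}(m+1)^3x^m$ and the resulting series $\sum_{N\ge1}\sigma_3^{*}(N)q^{N-1}$ with $\sigma_3^{*}(N)=\sum_{d\mid N,\ N/d\ \text{odd}}d^3$ agree exactly with what the paper obtains (there the coefficient is written as $\sum_{2\nmid d\mid n}n^3/d^3$). From that point on your route genuinely diverges. The paper never invokes Theorem \ref{Th1.1} again: it identifies $\sum_{n}\sigma_3^{*}(n)q^{n-1}$ with $\sum_n t_8(n)q^n=\psi(q)^8$ by quoting Legendre's closed form for the number of representations by eight triangular numbers (Lemma \ref{Lem2.5}) and finishes with Gauss's product for $\psi(q)$ (Lemma \ref{Lem2.1}). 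You instead square Theorem \ref{Th1.1} and reduce everything to the convolution identity $\sum_{i+j=m}\sigma(2i+1)\sigma(2j+1)=\sigma_3^{*}(m+1)$.

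That identity is true, but it is exactly where your proof stops being a proof. In generating-function terms it says $(\psi(q)^4)^2=\psi(q)^8$ with the two closed forms inserted, i.e.\ it is precisely equivalent to Legendre's $t_8$ formula given the $t_4$ formula of Lemma \ref{Lem2.2} --- so you have rediscovered the arithmetic input the paper cites, without supplying an argument for it. Of your two suggested routes, (a) can be made to work but not quite as stated: $\theta_2^8$ naturally lives on $\Gamma_0(4)$; the clean formulation is that $q\psi(q)^8=\eta(2\tau)^{16}/\eta(\tau)^8$ and $\sum_{n\ge1}\bigl(\sigma_3(n)-\sigma_3(n/2)\bigr)q^n$ both lie in the two-dimensional space $M_4(\Gamma_0(2))$ and agree in their first two Fourier coefficients, and the modularity of the eta quotient plus the dimension count still need to be justified or cited. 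Route (b) needs more than B\'esge's formula: isolating the odd-odd part of $\sum_{a+b=N}\sigma(a)\sigma(b)$ forces you to evaluate the even-even piece $\sum_{a'+b'=N/2}\sigma(2a')\sigma(2b')$, and $\sigma(2a')$ does not reduce to $\sigma(a')$, so ``splitting by parity'' is not routine; one needs parity-restricted convolution evaluations of Glaisher or Huard--Ou--Spearman--Williams type. Either fix is legitimate and would yield a self-contained alternative to the paper's citation-based proof, but as written the central identity is asserted rather than proved.
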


As is clearly seen, the left-hand side of (\ref{1.8}) times $(1-q)^4$
tends to $6\sum_{k=0}^\infty1/(2k+1)^4$ as $q\to1$. In view of (\ref{1.6}), the right-hand side of (\ref{1.8}) times $(1-q)^4$ has the limit $\pi^4/16$ as $q\to 1$. So (\ref{1.8}) implies (\ref{1.7}) and hence it gives a $q$-analogue of the formula $\zeta(4)=\pi^4/90$.

We will show Theorems \ref{Th1.1} and \ref{Th1.2} in the next section.

The Bernoulli numbers $B_0,B_1,\ldots$ are given by $B_0=1$ and
$$\sum_{k=0}^n\bi{n+1}kB_k=0\ \ \ (n=1,2,3,\ldots).$$
Euler proved (cf. \cite[pp.\,231--232]{IR}) that for each $m=1,2,3,\ldots$ we have
\begin{equation}\label{1.9}\zeta(2m)=(-1)^{m-1}\f{2^{2m-1}\pi^{2m}}{(2m)!}B_{2m}.\end{equation}
To seek for $q$-analogues of (\ref{1.9}) is our novel idea in this paper.
We don't know whether one can find a $q$-analogue of (\ref{1.9}) similar to (\ref{1.3}), (\ref{1.4}) and (\ref{1.8})
for $m=3,4,5,\ldots$, and this problem might stimulate further research.

\section{Proofs of Theorems \ref{Th1.1} and \ref{Th1.2}}

Recall that the {\it triangular numbers} are the integers
$$T_n:=\f{n(n+1)}2\ \ (n=0,1,2,\ldots).$$
As usual, we set

\begin{equation}\label{2.1}\psi(q):=\sum_{n=0}^\infty q^{T_n}\quad\t{for}\ |q|<1.\end{equation}

\begin{lemma}\label{Lem2.1} For $|q|<1$ we have
\begin{equation}\label{2.2}\psi(q)=\prod_{n=1}^\infty\f{1-q^{2n}}{1-q^{2n-1}}.\end{equation}
\end{lemma}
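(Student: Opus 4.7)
The plan is to deduce \eqref{2.2} from the Jacobi triple product identity
$$\sum_{n=-\infty}^\infty z^nq^{n^2}=\prod_{n=1}^\infty(1-q^{2n})(1+zq^{2n-1})(1+z^{-1}q^{2n-1}),$$
valid for $|q|<1$ and $z\in\C\setminus\{0\}$. The first move is to substitute $z=q^{1/2}$ and replace $q$ by $q^{1/2}$, which converts the exponent on the left to $(n^2+n)/2$. Since both sides of \eqref{2.2} are analytic in $q$ on $|q|<1$, the choice of branch for $q^{1/2}$ is irrelevant; alternatively one may work formally in $\Z[[q^{1/2}]]$ and specialize at the end.

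Next I would treat the two sides separately. On the left, the symmetry $T_{-n-1}=T_n$ for $n\ge 0$ pairs negative with nonnegative indices and collapses the bilateral sum into $2\psi(q)$. On the right, the substitution produces
$$\prod_{n=1}^\infty(1-q^n)(1+q^n)(1+q^{n-1}).$$
Separating the $n=1$ factor $1+q^0=2$ from the last product and reindexing turns this into
$$2\prod_{n=1}^\infty(1-q^n)(1+q^n)^2=2\prod_{n=1}^\infty(1-q^{2n})\cdot\prod_{n=1}^\infty(1+q^n),$$
after using $(1-q^n)(1+q^n)=1-q^{2n}$. Finally, Euler's identity $\prod_{n\ge 1}(1+q^n)=\prod_{n\ge 1}1/(1-q^{2n-1})$, a one-line consequence of $1+q^n=(1-q^{2n})/(1-q^n)$, converts the remaining factor, and cancelling the $2$'s on both sides delivers \eqref{2.2}.

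The main obstacle is really just careful bookkeeping: tracking the index shifts and justifying the rearrangements of the infinite products, both of which are immediate from absolute convergence for $|q|<1$. Conceptually \eqref{2.2} is Gauss's classical formula for $\psi(q)$, so one could simply quote it from a standard reference on theta functions, but the Jacobi triple product route gives a self-contained derivation in a few lines.
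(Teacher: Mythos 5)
Your argument is correct, but note that the paper does not prove Lemma \ref{Lem2.1} at all: it simply quotes it as a classical result of Gauss, citing Berndt. So your proposal supplies a self-contained derivation where the paper offers only a reference. The derivation itself checks out: with $q\mapsto q^{1/2}$ and $z=q^{1/2}$ the Jacobi triple product gives $\sum_{n\in\Z}q^{n(n+1)/2}=\prod_{n\ge1}(1-q^n)(1+q^n)(1+q^{n-1})$; the involution $n\mapsto -n-1$ collapses the left side to $2\psi(q)$, and peeling off the factor $1+q^0=2$, using $(1-q^n)(1+q^n)=1-q^{2n}$, and invoking Euler's $\prod_{n\ge1}(1+q^n)=\prod_{n\ge1}(1-q^{2n-1})^{-1}$ turns the right side into $2\prod_{n\ge1}(1-q^{2n})/(1-q^{2n-1})$, as required. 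The half-integer powers all cancel in the end, and your remark that one may work in $\Z[[q^{1/2}]]$ and specialize disposes of the branch issue. The trade-off is the usual one: quoting Gauss keeps the paper short and places the lemma in its classical context, while your route makes the paper self-contained at the cost of assuming the (deeper, but standard) triple product identity.
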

\begin{remark}\label{Rem2.1} This is a well-known result due to Gauss (cf. Berndt \cite[(1.3.14), p.\,11]{B06}).
\end{remark}

\begin{lemma}\label{Lem2.2} Let $n\in\N$ and
$$t_4(n):=|\{(w,x,y,z)\in\N^4:\ T_w+T_x+T_y+T_z=n\}|.$$
Then
\begin{equation}\label{2.3}t_4(n)=\sigma(2n+1),\end{equation}
where $\sigma(m)$ denotes the sum of all positive divisors of a positive integer $m$.
\end{lemma}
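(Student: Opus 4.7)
The plan is to reduce the four-triangular-number count $t_4(n)$ to a four-square count via the elementary substitution $w\mapsto 2w+1$, and then invoke Jacobi's classical four-square theorem $r_4(m)=8\sum_{d\mid m,\,4\nmid d}d$.

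First I would use the identity $8T_w+1=(2w+1)^2$ to observe that
$$T_w+T_x+T_y+T_z=n\iff (2w+1)^2+(2x+1)^2+(2y+1)^2+(2z+1)^2=8n+4.$$
The map $w\mapsto \pm(2w+1)$ is a two-to-one surjection from $\N$ onto the odd integers, so each quadruple $(w,x,y,z)\in\N^4$ counted by $t_4(n)$ corresponds to exactly $2^4=16$ ordered quadruples $(a,b,c,d)\in\Z^4$ of odd integers with $a^2+b^2+c^2+d^2=8n+4$.

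Next I would sort the representations of $8n+4$ as four integer squares by parity modulo $8$. Since odd squares are $\equiv 1$ and even squares are $\equiv 0$ or $4\pmod 8$, a four-square sum congruent to $4\pmod 8$ must have either all four summands odd or all four even. In the all-even case, writing $a=2a'$, etc., gives $a'^2+b'^2+c'^2+d'^2=2n+1$, which contributes $r_4(2n+1)$. Hence
$$r_4(8n+4)=16\,t_4(n)+r_4(2n+1).$$
Applying Jacobi's theorem: since $2n+1$ is odd, $r_4(2n+1)=8\sigma(2n+1)$; and since $8n+4=4(2n+1)$, its divisors not divisible by $4$ are precisely the $e$ and $2e$ with $e\mid 2n+1$, giving $r_4(8n+4)=8(1+2)\sigma(2n+1)=24\,\sigma(2n+1)$. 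Substituting yields $16\,t_4(n)=16\,\sigma(2n+1)$, which is \eqref{2.3}.

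The only step requiring real care is the mod-$8$ parity filtration that isolates the all-odd and all-even contributions to $r_4(8n+4)$; everything else is a mechanical application of Jacobi's four-square theorem. An alternative, more thematic route consistent with the paper's use of $\psi(q)$ would be to interpret the identity as $\psi(q)^4=\sum_{n\ge 0}\sigma(2n+1)q^n$ and prove it via theta-function manipulations, but the combinatorial route above is the shortest self-contained argument.
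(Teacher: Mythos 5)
Your argument is correct, but it is worth noting that the paper does not prove Lemma \ref{Lem2.2} at all: it simply cites the identity as a known result of Legendre/Jacobi type from Berndt's book (\cite[(3.6.6), p.\,72]{B06}), and in Remark \ref{Rem2.2} quotes Jacobi's four-square theorem only as a point of contrast. What you have supplied is therefore a genuine self-contained derivation that the paper omits. Your route is the classical one: the substitution $8T_w+1=(2w+1)^2$ converts the triangular-number count into the count of all-odd representations of $8n+4$ (with the factor $16$ from the sign choices), the mod~$8$ analysis correctly shows that every four-square representation of a number $\equiv 4\pmod 8$ is either all-odd or all-even, and the bookkeeping $r_4(8n+4)=16\,t_4(n)+r_4(2n+1)$ together with $r_4(2n+1)=8\sigma(2n+1)$ and $r_4(8n+4)=24\,\sigma(2n+1)$ gives exactly \eqref{2.3}. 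Every step checks out, including the divisor computation for $4(2n+1)$. The trade-off is that your proof rests on Jacobi's theorem as a black box, whereas a generating-function proof via $\psi(q)^4$ (the alternative you mention, and the one closest in spirit to how the paper uses the lemma in \eqref{2.6}) would avoid that dependence at the cost of theta-function manipulations; either is a legitimate way to fill the gap the paper leaves to the literature.
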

\begin{remark}\label{Rem2.2} This is also a known result, see  \cite[(3.6.6.), p.72]{B06}. In contrast with (\ref{2.3}), for any positive integer $n$
Jacobi showed that
$$r_4(n):=|\{(w,x,y,z)\in\Z^4:\ w^2+x^2+y^2+z^2=n\}|=8\sum_{4\nmid d\mid n}d$$
(cf. \cite[(3.3.1), p.\,59]{B06}).
\end{remark}

\begin{lemma}\label{Lem2.3} For $|q|<1$ we have
\begin{equation}\label{2.4}
\sum_{k=0}^\infty\f{q^k(1+q^{2k+1})}{(1-q^{2k+1})^2}=\sum_{n=0}^\infty\sigma(2n+1)q^{n}.
\end{equation}
\end{lemma}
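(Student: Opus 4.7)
The plan is to expand the left-hand side as a double power series in $q$ and reorganize it as a sum over the divisors of $2n+1$. First, I would use the elementary expansion
$$\f1{(1-q^{2k+1})^2}=\sum_{j=0}^\infty(j+1)q^{(2k+1)j},$$
valid for $|q|<1$. Multiplying by $q^k(1+q^{2k+1})$ and shifting the index in the second piece, the contributions $(j+1)$ and $j$ for $j\ge 1$ combine to $2j+1$, so that
$$\f{q^k(1+q^{2k+1})}{(1-q^{2k+1})^2}=\sum_{j=0}^\infty(2j+1)q^{k+(2k+1)j}.$$

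Next I would sum over $k\ge 0$; absolute convergence for $|q|<1$ allows the interchange of sums. The exponent of $q$ is $n:=k+(2k+1)j$, and a quick manipulation gives $2n+1=(2j+1)(2k+1)$. Thus, as $k$ and $j$ each range over $\N$, the pair $(2j+1,2k+1)$ ranges over all ordered factorizations of the odd number $2n+1$ as a product of two (necessarily odd) positive divisors, with weight $2j+1$ (the first factor). Hence the coefficient of $q^n$ on the left equals
$$\sum_{\substack{d\mid(2n+1)\\d>0}}d=\sigma(2n+1),$$
since every divisor of the odd number $2n+1$ is odd. This yields \eqref{2.4}.

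The steps are essentially routine manipulations of power series; the only nontrivial observation is the bijection between pairs $(j,k)\in\N^2$ and ordered factorizations $2n+1=d_1d_2$ into odd positive integers, which is what converts the double sum into a divisor sum. The main obstacle, if any, is to handle the cross term $q^{2k+1}$ in the numerator correctly so that the coefficients collapse to the odd numbers $2j+1$; once this is done, the divisor-sum identification is immediate.
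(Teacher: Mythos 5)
Your proposal is correct and follows essentially the same route as the paper: both expand $\f{1+z}{(1-z)^2}$ (with $z=q^{2k+1}$) into a series with coefficients $2j+1$ --- you by combining $(j+1)+j$ after an index shift, the paper via the equivalent decomposition $\f2{(1-z)^2}-\f1{1-z}$ --- and then identify the double sum over $(j,k)$ with the divisor sum for the odd number $2n+1$. No gaps.
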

\begin{proof} For each $k\in\N$, clearly
\begin{align*}\f{q^k(1+q^{2k+1})}{(1-q^{2k+1})^2}=&2q^k(1-q^{2k+1})^{-2}-q^k(1-q^{2k+1})^{-1}
\\=&2q^k\sum_{j=0}^\infty\bi{-2}j(-q^{2k+1})^j-q^k\sum_{j=0}^\infty q^{(2k+1)j}
\\=&\sum_{j=0}^\infty(2j+1)q^{(2j+1)(k+1/2)-1/2}.
\end{align*}
Thus
\begin{align*} \sum_{k=0}^\infty\f{q^k(1+q^{2k+1})}{(1-q^{2k+1})^2}
=&\sum_{k=0}^\infty\sum_{j=0}^\infty(2j+1)q^{((2j+1)(2k+1)-1)/2}
\\=&\sum_{n=0}^\infty\(\sum_{d\mid 2n+1}d\)q^{(2n+1-1)/2}
=\sum_{n=0}^\infty\sigma(2n+1)q^n.
\end{align*}
This concludes the proof.
\end{proof}

\begin{lemma}\label{Lem2.4} For each $n\in\N$, we have
\begin{equation}\label{2.5}|\{(u,v,x,y)\in\N^4:\ T_u+T_v+2T_x+2T_y=n\}|=\sum_{d\mid 4n+3}\f {d-(-1)^{(d-1)/2}}4.
\end{equation}
\end{lemma}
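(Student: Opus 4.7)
\medskip\noindent\textbf{Proposal.} The plan is to identify both sides of $(2.5)$ with the coefficient of $q^n$ in a common generating function, namely $\psi(q)^2\psi(q^2)^2$. By the definition of $\psi$, the left-hand side of $(2.5)$ is exactly $[q^n]\psi(q)^2\psi(q^2)^2$. For the right-hand side, I first observe that every divisor of $4n+3$ is odd, so that
$$\sum_{d\mid 4n+3}(-1)^{(d-1)/2}=d_1(4n+3)-d_3(4n+3)=\frac14\,r_2(4n+3)$$
by Jacobi's two-squares theorem, where $d_i(m)$ denotes the number of divisors of $m$ congruent to $i$ modulo $4$. But $4n+3\equiv 3\pmod4$ is never a sum of two squares, so this quantity vanishes. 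Consequently $(2.5)$ is equivalent to
$$\psi(q)^2\psi(q^2)^2=\frac14\sum_{n\ge0}\sigma(4n+3)\,q^n. \qquad (*)$$

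To establish $(*)$, I would mirror the Lambert-series strategy of Lemma \ref{Lem2.3}. From Jacobi's two-squares theorem applied to $|\{(u,v)\in\N^2:T_u+T_v=n\}|$ one has $[q^n]\psi(q)^2=d_1(4n+1)-d_3(4n+1)$; writing $4n+1=de$ with $d,e$ odd, splitting into the two residue cases $d,e\equiv 1\pmod4$ and $d,e\equiv 3\pmod4$, and summing each geometric subseries gives
$$\psi(q)^2=\sum_{j\ge0}\left(\frac{q^j}{1-q^{4j+1}}-\frac{q^{3j+2}}{1-q^{4j+3}}\right),$$
with an analogous identity for $\psi(q^2)^2$ obtained by $q\mapsto q^2$. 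Multiplying the two Lambert series produces a fourfold sum; re-indexing by the exponent of $q$ should collapse the coefficient of $q^n$ into $\sigma(4n+3)/4$, proving $(*)$.

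The main obstacle is precisely this final bookkeeping: the product of the two Lambert series involves four competing index parities and the desired cancellations arise from the very same ``$(-1)^{(d-1)/2}$-vanishing'' mechanism used to simplify the right-hand side of $(2.5)$ in the first place. A more conceptual alternative, which I find illuminating, is to use the substitution $A=2u+1$, $B=2v+1$, $C=2x+1$, $D=2y+1$ together with $8T_k+1=(2k+1)^2$ to translate $T_u+T_v+2T_x+2T_y=n$ into $A^2+B^2+2C^2+2D^2=8n+6$. A parity check modulo $8$ shows that every $\Z^4$-solution of this equation necessarily has all four coordinates odd, so the left-hand side of $(2.5)$ equals $\tfrac1{16}\,r(8n+6)$, where $r(m)$ denotes the full representation number of $m$ by the quaternary form $x^2+y^2+2z^2+2w^2$. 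Since this form has class number one in its genus, its representation number is given by a single divisor sum, and the specialization $r(8n+6)=4\sigma(4n+3)$ (verifiable directly from the Eisenstein-series/theta identity for $\psi(q)^2\psi(q^2)^2$) yields $(*)$ at once.
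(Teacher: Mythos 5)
First, a point of comparison: the paper does not prove Lemma \ref{Lem2.4} at all --- it is quoted as a known theorem of K.~S.~Williams \cite{W} (see Remark \ref{Rem2.3}) --- so a complete proof would be genuinely new content here. Your preliminary reduction of the right-hand side is correct and worth keeping: every divisor $d$ of $4n+3$ pairs with $(4n+3)/d$, which lies in the opposite residue class modulo $4$, and $4n+3$ is never a square, so $\sum_{d\mid 4n+3}(-1)^{(d-1)/2}=0$ (your Jacobi-two-squares argument gives the same conclusion) and the lemma is equivalent to your identity $(*)$. The Lambert-series expansion you write for $\psi(q)^2$ is also correct.

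However, neither route to $(*)$ is complete, and the second contains a concrete error. In the Lambert-series route, the step you defer as ``final bookkeeping'' is the entire content of Williams' theorem: the coefficient of $q^n$ in the product is the convolution $\sum_{a+2b=n}\bigl(d_1(4a+1)-d_3(4a+1)\bigr)\bigl(d_1(4b+1)-d_3(4b+1)\bigr)$, and collapsing this to $\sigma(4n+3)/4$ is a genuine theta-function identity, not a re-indexing; as written the argument stops exactly where the work begins. In the alternative route, the claim that a congruence modulo $8$ forces every integral solution of $A^2+B^2+2C^2+2D^2=8n+6$ to have all four coordinates odd is false: $(A,B,C,D)=(2,0,1,0)$ solves $A^2+B^2+2C^2+2D^2=6$. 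Indeed $r(6)=32$, only $16$ of those solutions are all-odd, and the left side of (\ref{2.5}) at $n=0$ is $1$, so your identification $\t{LHS}=\f1{16}r(8n+6)$ fails ($32/16=2\ne 1$); the quoted evaluation $r(8n+6)=4\sigma(4n+3)$ is also false (it would give $r(6)=16$, whereas the correct formula is $r(8n+6)=8\sigma(4n+3)$). These two factor-of-two errors cancel, which is why you land on the right constant, but neither statement is justified and the mod-$8$ argument offered for the first is wrong. Repairing this route requires proving that the all-odd solutions make up exactly half of all solutions and then establishing $r(8n+6)=8\sigma(4n+3)$, which is again essentially the identity to be proved; at that point citing \cite{W}, as the paper does, is the honest course.
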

\begin{remark}\label{Rem2.3} This is a known result due to K. S. Williams \cite{W}.
\end{remark}

\medskip
\noindent{\it Proof of Theorem \ref{Th1.1}}. In view of Lemmas \ref{Lem2.1} and \ref{Lem2.2},
\begin{equation}\label{2.6}\prod_{n=1}^\infty\f{(1-q^{2n})^4}{(1-q^{2n-1})^4}=\psi(q)^4=\sum_{n=0}^\infty t_4(n)q^n=\sum_{n=0}^\infty\sigma(2n+1)q^n.\end{equation}
Combining this with (\ref{2.4}) we immediately obtain (\ref{1.3}).

By Lemmas \ref{Lem2.1} and \ref{Lem2.4}, we have
\begin{align*}&\prod_{n=1}^\infty\f{(1-q^{2n})^2(1-q^{4n})^2}{(1-q^{2n-1})^2(1-q^{4n-2})^2}
\\=&\psi(q)^2\psi(q^2)^2=\sum_{n=0}^\infty |\{(u,v,x,y)\in\N^4:\ T_u+T_v+2T_x+2T_y=n\}|q^n
\\=&\sum_{n=0}^\infty\sum_{d\mid 4n+3}\f{d-(-1)^{(d-1)/2}}4q^n
\\=&\sum_{j=0}^\infty\f{(4j+1)-1}4\sum_{k=0}^\infty q^{((4j+1)(4k+3)-3)/4}
\\&+\sum_{j=0}^\infty\f{(4j+3)+1}4\sum_{k=0}^\infty q^{((4j+3)(4k+1)-3)/4}
\\=&\sum_{k=0}^\infty\(q^k\sum_{j=0}^\infty jq^{j(4k+3)}+q^{3k}\sum_{j=0}^\infty(j+1)q^{j(4k+1)}\).
\end{align*}
For $|z|<1$, clearly
$$\sum_{j=0}^\infty(j+1)z^j=\sum_{j=1}^\infty jz^{j-1}=\sum_{j=0}^\infty\f d{dz}z^j=\f d{dz}(1-z)^{-1}=\f 1{(1-z)^2}.$$
Therefore
\begin{align*}&\prod_{n=1}^\infty\f{(1-q^{2n})^2(1-q^{4n})^2}{(1-q^{2n-1})^2(1-q^{4n-2})^2}
\\=&\sum_{k=0}^\infty\l(q^k\f{q^{4k+3}}{(1-q^{4k+3})^2}+\f{q^{3k}}{(1-q^{4k+1})^2}\r)
=\sum_{m=0}^\infty\f{q^{2m-\lfloor(-1)^mm/2\rfloor}}{(1-q^{2m+1})^2}.
\end{align*}
This proves (\ref{1.4}). The proof of Theorem \ref{Th1.1} is now complete. \qed
\medskip

\begin{remark}\label{Rem2.4} In light of (\ref{1.6}) and (\ref{2.6}), we have the curious formula
\begin{equation}\label{2.7}\lim_{q\to 1\atop|q|<1} (1-q)^2\sum_{n=0}^\infty\sigma(2n+1)q^n=\f{\pi^2}4.
\end{equation}
\end{remark}

\begin{lemma}\label{Lem2.5} Let $n\in\N$ and
$$t_8(n)=|\{(x_1,\ldots,x_8)\in\N^8:\ T_{x_1}+T_{x_2}+\cdots+T_{x_8}=n\}|.$$
Then
\begin{equation}\label{2.8}t_8(n)=\sum_{2\nmid d\mid n+1}\f{(n+1)^3}{d^3}.\end{equation}
\end{lemma}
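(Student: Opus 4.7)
The plan is to reformulate the claimed divisor sum as a coefficient in a Lambert-type series, and then to identify that Lambert series with the generating function $\psi(q)^8=\sum_{n\ge 0}t_8(n)q^n$.

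First I expand geometrically, using $1/(1-q^{2m})=\sum_{j\ge 0}q^{2mj}$, to obtain
$$\sum_{m=1}^\infty\f{m^3 q^{m-1}}{1-q^{2m}}=\sum_{m\ge 1}\sum_{j\ge 0}m^3q^{m(2j+1)-1}.$$
The coefficient of $q^n$ on the right is $\sum_{m(2j+1)=n+1}m^3$. Writing $d=2j+1=(n+1)/m$, the index $d$ runs over odd divisors of $n+1$; with $m=(n+1)/d$, the sum becomes $\sum_{2\nmid d\mid n+1}(n+1)^3/d^3$, which is exactly the right-hand side of \eqref{2.8}.

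The key step is then to establish the Lambert-series representation
$$\psi(q)^8=\sum_{m=1}^\infty\f{m^3 q^{m-1}}{1-q^{2m}}. \qquad(\star)$$
This is a classical identity of Jacobi, equivalent to the eight-triangle theorem. It can be proved, for instance, by combining Jacobi's eight-square formula
$$\varphi(q)^8=1+16\sum_{n\ge 1}\bigg(\sum_{d\mid n}(-1)^{n-d}d^3\bigg)q^n$$
with the theta identity $\varphi(q)^4-\varphi(-q)^4=16q\,\psi(q^2)^4$ and some algebraic manipulation, or alternatively by recognizing that both sides of $(\star)$ are weight-$4$ modular forms on $\Gamma_0(2)$ whose Fourier expansions agree in sufficiently many coefficients. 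Either route appears in Berndt's books on Ramanujan cited in this paper.

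Once $(\star)$ is in hand, the lemma follows at once: by the definition of $t_8(n)$ one has $\psi(q)^8=\bigl(\sum_{n\ge 0}q^{T_n}\bigr)^8=\sum_{n\ge 0}t_8(n)q^n$, so comparing $q^n$-coefficients in $(\star)$ with the expansion from the first paragraph yields \eqref{2.8}. The main obstacle is $(\star)$ itself: the coefficient-extraction calculation is entirely routine, but the Lambert-series form of $\psi(q)^8$ is genuinely nonelementary and ultimately rests on Jacobi's theta-function machinery, rather than the direct $q$-series manipulations used for Lemmas \ref{Lem2.2}--\ref{Lem2.4}.
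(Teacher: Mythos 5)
Your argument is correct, but it takes a genuinely different route from the paper, which does not prove Lemma \ref{Lem2.5} at all: Remark \ref{Rem2.5} simply cites it as a classical result of Legendre (Berndt \cite{B06}, p.\,139, or \cite{ORW}). Moreover, your key identity $(\star)$ is, up to multiplying both sides by $q$ and the elementary rewriting $\sum_{m\ge1}m^3q^m/(1-q^{2m})=\sum_{k\ge0}q^{2k+1}(1+4q^{2k+1}+q^{4k+2})/(1-q^{2k+1})^4$ together with Lemma \ref{Lem2.1}, exactly identity \eqref{1.8}; the coefficient extraction in your first paragraph is the paper's proof of Theorem \ref{Th1.2} run in reverse. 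So your route proves the lemma by first establishing (an equivalent of) the theorem it is meant to support. This is not circular, since you propose to prove $(\star)$ independently from Jacobi's eight-square formula and the theta identity $\varphi(q)^4-\varphi(-q)^4=16q\,\psi(q^2)^4$ (or by a modular-forms argument); both routes are legitimate and do appear in Berndt's books, and your coefficient computation checking that the $q^n$-coefficient of $(\star)$ is $\sum_{2\nmid d\mid n+1}(n+1)^3/d^3$ is accurate. What your organization costs is that all of the real difficulty is concentrated in $(\star)$, which you only sketch (``some algebraic manipulation'', ``sufficiently many coefficients'' with no dimension count of the relevant space of forms), whereas the paper's architecture deliberately isolates the classical arithmetic input \eqref{2.8} as a cited black box and keeps the new contribution --- the elementary power-series derivation of the $q$-analogue \eqref{1.8} --- separate from it. What your organization buys is that, if you fleshed out $(\star)$, you would have a self-contained proof of Legendre's theorem, which is more than the paper attempts; but as written, your proof of the lemma is no more complete than the paper's citation, only differently packaged.
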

\begin{remark}\label{Rem2.5} This is a result due to A. M. Legendre, see \cite [p.\,139]{B06}
or \cite[Theorem 5]{ORW}.
\end{remark}

\noindent{\it Proof of Theorem \ref{Th1.2}}. For $z\in\C$ with $|z|<1$, we have
$$\f{z}{(1-z)^4}=z\sum_{k=0}^\infty\bi{-4}k(-z)^k
=\sum_{k=0}^\infty\bi{k+3}3z^{k+1}=\sum_{k=1}^\infty\f{k(k+1)(k+2)}6z^k$$
and hence
\begin{align*}&\f{z(1+4z+z^2)}{(1-z)^4}
\\=&(1+4z+z^2)\sum_{k=1}^\infty k(k+1)(k+2)\f{z^k}6
\\=&\sum_{k=1}^\infty (k(k+1)(k+2)+4(k-1)k(k+1)+(k-2)(k-1)k)\f{z^k}6
\\=&\sum_{k=1}^\infty k^3z^k.
\end{align*}
Thus
$$\sum_{k=0}^\infty\f{q^{2k+1}(1+4q^{2k+1}+q^{4k+2})}{(1-q^{2k+1})^4}
=\sum_{k=0}^\infty\sum_{m=1}^\infty m^3q^{(2k+1)m}
=\sum_{n=1}^\infty\(\sum_{2\nmid d\mid n}\f{n^3}{d^3}\)q^n.$$
Combining this with Lemma \ref{Lem2.5} we obtain
$$\sum_{k=0}^\infty\f{q^{2k}(1+4q^{2k+1}+q^{4k+2})}{(1-q^{2k+1})^4}=\sum_{n=1}^\infty t_8(n-1)q^{n-1}=\psi(q)^8.$$
So, with the help of Lemma \ref{Lem2.1} we get the desired identity (\ref{1.8}). This completes the proof. \qed

\subsection*{Acknowledgements}
The initial version of this paper was posted to {\tt arXiv} in Feb. 2018. The work was supported by the National Natural Science Foundation of China (Grant No. 11571162) and the NSFC-RFBR Cooperation and Exchange Program (Grant No. 11811530072).

\normalsize

\end{document}